\documentclass[11pt]{article} 
\usepackage{theorem}
\usepackage{latexsym}
\usepackage{amssymb}
\usepackage{amsmath}
\usepackage{amsfonts}
\usepackage{amscd}
\usepackage{mathrsfs}
\usepackage{bezier}
\usepackage{color}
\usepackage{eufrak}
\usepackage{epic}
\usepackage[all]{xy}
\usepackage{bbm}
\usepackage{enumerate}

\makeatletter

\newbox\authrun
\newtoks\authorrunning
\newtoks\tocauthor
\newbox\titrun
\newtoks\titlerunning
\newtoks\toctitle

\def\institute#1{\gdef\@institute{#1}}
\def\author#1{\gdef\@author{#1}}
\def\maketitle{
\begin{center}
{\LARGE\bf \@title \par}
\vskip 2em
\end{center}
\let\maketitle=\relax}

\def\section{
\@startsection{section}
{1}{\z@}
{3.5ex plus 1ex minus .2ex}
{2.3ex plus .2ex}
{\bf}}

\def\ps@run{%
 \def\@oddfoot{}\def\@evenfoot{}
 \def\@oddhead{\hfil \the\titlerunning/ \the\authorrunning\hfil \thepage }%
 \def\@evenhead{\hfil \hfil \thepage }}%

\def\footnoterule{\kern-3\p@
  \hrule width .4\columnwidth
  \kern 2.6\p@}
\long\def\@makefntext#1{\parindent 1em\noindent
            \hbox {\hss$\m@th^{\@thefnmark}$}#1}
            
\makeatother
\newcommand{\Def}{{\mathrm{Def}}}

\newcommand{\bz}{{\mathbb{Z}}}

\newcommand{\Q}{{\mathbb{Q}}}

\renewcommand{\hom}{{\mathrm{Hom}}}

\newcommand{\mcalC}{{\mathcal C}}

\newtheorem{theorem}{Theorem}[section]
\newtheorem{proposition}[theorem]{Proposition}
\newtheorem{lemma}[theorem]{Lemma}
\newtheorem{corollary}[theorem]{Corollary}
\theorembodyfont{\rmfamily}

\newtheorem{notation}{\mdseries{\itshape{Notation}}}
\numberwithin{equation}{section}
\begin{document}
\pagestyle{run}
\thispagestyle{plain}
\title{Deflation map and the sum of inverses of the element orders in finite groups} 
\institute{Department of Mathematics, Kindai University,
Higashi-Osaka, 577-8502,
Japan \\ \rm E-mail: odaf@math.kindai.ac.jp}
\titlerunning{Deflation map}
\authorrunning{Fumihito Oda}
\maketitle
\vskip 2em
\centerline{\Large\bf Fumihito Oda\!\,\footnotemark}
\vskip 1em
\centerline{\small\itshape\lineskip .75em 
Department of Mathematics, Kindai University, Japan} 
\centerline{\small\lineskip .75em 
{\itshape E-mail:} odaf@math.kindai.ac.jp}
\vskip 2em
\begin{abstract}
Let $G$ be a finite group. 
In this note, we construct an element ${\sigma}^G$ in the Burnside ring of $G$ over $\Q$, which gives a rational number $m(G)$ that is the sum of the inverses of the element orders in $G$, 
by using the deflation map $\Def^G_{G/N}$ induced by the $(G/N,G)$-biset $G/N$ for a normal subgroup $N$ of $G$.
As a corollary to Theorem, we obtain another expression for the rank of the crossed Burnside ring $B^{\rm c}(G)$ of $G$ 
and the determinant of the Cartan matrix of $p$-local Mackey algebra of $G$ over a field $k$ of characteristic $p>0$ with big enough.
\end{abstract}

\footnotetext{
{\rm This work was supported by JSPS KAKENHI Grant Number JP19K03457 and the Research Institute for Mathematical Sciences, an International Joint Usage/Research Center located in Kyoto  University.}\\
2010 \it Mathematics Subject Classification. {\rm Primary 19A22; Secondary 
16U60.}
\par\noindent\itshape Keywords. {\rm finite group, Burnside ring, biset functor, deflation map. }
}
%
%
\section{Introduction}

Let $G$ be a finite group with the identity element $e$. 
Let $R$ be a commutative unital ring. 
The theory of biset functors, introduced by Serge Bouc, gives 
a unified construction of the elementary operations of restriction, induction, inflation, deflation and transport by isomorphism \cite{Bouc00, Bouc06, Bouc07, Bouc10, BoucThevenaz00}. 
Let $B(G,H)$ be the Grothendieck group of $(G,H)$-bisets, where $G$ and $H$ are finite groups. 
The biset category $R\mcalC$ over $R$ is the category of finite groups with morphism sets $\hom_{R\mcalC}(H,G)=RB(G,H)$. 
The composition of morphisms is induced by the usual tensor product of bisets. A {\it biset functor} over $R$ is an $R$-linear functor from $R\mcalC$ to the category $R$-Mod of $R$-modules (\cite[3.2.2]{Bouc10}). 
Biset functors over $R$ form an abelian category, where morphisms are natural transformations of functors. 
Let $B$ be the Burnside biset functor.
The cardinality of a set $S$ is denoted by $|S|$.
We will use the algebra homomorphism
\[
\Q B(G) \ni \alpha\mapsto |\alpha|\in \Q
\]
extending the map $X \mapsto |X|$ sending a finite $G$-set $X$ to its cardinality $|X|$.
The purpose of this note is to construct an element $ {\sigma}^G\in \Q B(G)$, which gives a rational number $m(G)$ that is the sum of the inverses of the element orders in $G$ (see \cite{Baniasad Azad Khosravi and Rashidi} for instance), by using 
the deflation map $\Def^G_{G/N}$ which are tools used in some proofs of the biset theory introduced by Bouc.
We recall that, for $N \lhd G$, the {\it deflation map} of the Burnside ring is an additive morphism
\[
 \Def^G_{G/N} :  \Q B(G) \longrightarrow \Q B(G/N)
\]
that sends a $G$-set $X$ to the set of orbits $N\backslash X$ of $N$ on $X$ (\cite[p.78]{Bouc10}). 
This set has a natural structure of $G/N$-set. 
We denote by $s_G$ (resp. $\mcalC_G$) the set of conjugacy classes of (resp. cyclic) subgroups of $G$, and a set of representatives of $s_G$ (resp. $\mcalC_G$) is denoted by $[s_G]$ (resp. $[\mcalC_G]$). 
Let $p$ be a prime number. 
We denote by $s_{p}(G)$ (resp. $G_{p'}$) the set of $p$-subgroups (resp. $p'$-elements) of $G$, and a set of representatives of conjugacy classes of $s_{p}(G)$ (resp. $G_{p'}$) is denoted by $[s_{p}(G)]$ (resp. $[G_{p'}]$). 
We denote by $\overline{N}_G(H)$ the quotient group of $N_G(H)/H$ for $H\le G$. 
We denote by $[H, K]$ the subgroup of $G$ generated by all commutators $[h, k] = h^{-1}k^{-1}hk$ for elements $h \in H$ and $k \in K$. 
We write 
\[
 m(G)=\sum_{g\in G}\frac{1}{o(g)}
\]
which is the sum of the inverses of the element orders in $G$. 
The value has recently begun to be studied by various researchers (for instance \cite{Baniasad Azad Khosravi and Rashidi}).
We introduce an element
\[
{\sigma}^G=\sum_{H\in [s_G]}|G:H|{e}^G_H \in \Q B(G)\]
which is inspired by the Frobenius-Wielandt morphism (\cite{DressSiebeneicherYoshida92}, \cite{Romero}).
Let 
\[
 G\backslash {\alpha} = \left|{\rm Def}^G_{G/G}({\alpha})\right|
\]
for $\alpha\in \Q B(G)$.
The following theorem, obtained as a very simple consequence of Bouc's theory, is the main result of this note.
\bigskip

\noindent{\bf Theorem \ref{Thm:Def formula in KR(G)}}
{\it 
Let $G$ be a finite group. Then 
\[
 G\backslash {\sigma^G}=m(G).
\] 
}

\noindent
While the report is written in Japanese, Yuya Kojima presents a highly generalized and sophisticated take on theorem above (\cite{kojima24}, \cite{kojima25}).
As a corollary to Theorem \ref{Thm:Def formula in KR(G)}, we obtain another expression for the rank (\cite[Proposition 3.2]{OdaYoshida01}) of the crossed Burnside ring (\cite{Yoshida97}, \cite{OdaYoshida01}, \cite{Bouc03}, \cite{Balmer Dell'Ambrogio20}) $B^{\rm c}(G)$ of $G$ 
and the determinant (\cite[Theorem 2.8]{Bou11}) of the Cartan matrix (\cite[Theorem 2.8]{Bou11}) of $p$-local Mackey algebra $\mu_k(G,1)$ (\cite[12.3.2]{Bouc97}) of the Mackey algebra $\mu_k(G)$ (\cite[Section 3]{theve-webb95}) of $G$ over a field $k$ of characteristic $p>0$ with big enough as follows:
\medskip

\noindent{\bf Corollary \ref{cor_CBRandMac}
{\it 
 Let $G$ be a finite group. Then the following hold.
\begin{enumerate}
 \item The rank of the crossed Burnside ring $B^{\rm c}(G)$ of $G$ is equal to 
\[
\mathrm{rk}_{\bz}(B^{\rm c}(G)) = \sum_{K\in[{\it s}_G]}\frac{|C_G(K)|}{|N_G(K)|}|K|\left((K/[K,K])\backslash {\sigma^{K/[K,K]}}\right).
\]
\item The determinant of the Cartan matrix of the $p$-local Mackey algebra $\mu_k(G,1)$ is equal to 
\[
 \mathrm{det}\mathsf{C}(\mu_k(G,1)) =
\prod_{R\in [s_p(G)]}\prod_{s\in [{\overline{N}_G(R)}_{p'}]}\left(|{C_{\overline{N}_G(R)}(s)}|_p\left(R/[\langle sR\rangle,R]\backslash \sigma^{R/[\langle sR\rangle,R]}\right)\right).
\]
\end{enumerate}
}
}
\begin{notation}
We write $\mu_G$ the M\"obius function of the poset of subgroups of $G$. 
We write $\varphi$ the Euler's totien function.
\end{notation}

%
%
\section{Preliminaries}
In this section, we recall some well-known basic facts that will be used to prove the theorem stated in the next section.
\par\medskip
\begin{lemma}{\rm (\cite{Gluck81}, \cite{Yoshida83})}\label{Lem:IPformula} 
Let $G$ be a finite group. If $H$ is a subgroup of $G$, denote by $e^G_H$ the element of $\Q B(G)$ defined by 
\[
 e^G_H=\frac{1}{|N_G(H)|}\sum_{K\le H}|K|\mu_G(K,H)[G/K],
\]
where $\mu_G$ is the M\"{o}bius function for the poset of the subgroups of $G$.
Then $e^G_H = e^G_K$ if the subgroups $H$ and $K$ are conjugate in $G$, and the elements $e^G_H$, for $H\in [s_G]$, are the primitive idempotents of the $\Q$-algebra $\Q B(G)$.
\end{lemma}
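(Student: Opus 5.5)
The plan is to transport the question through the mark homomorphism. For a subgroup $L\le G$ let $\phi_L:\Q B(G)\to\Q$ be the $\Q$-linear extension of $X\mapsto |X^L|$. By Burnside's theorem, the map $\phi=(\phi_L)_{L\in[s_G]}:\Q B(G)\to\prod_{L\in[s_G]}\Q$ is an injective homomorphism of $\Q$-algebras. Both sides have dimension $|s_G|$, so $\phi$ is an isomorphism. The target is a product of copies of $\Q$, so its primitive idempotents are exactly the characteristic vectors of the single coordinates. Hence it suffices to show that
\[
\phi_L(e^G_H)=\begin{cases}1 & \text{if } L \text{ is conjugate to } H,\\ 0 & \text{otherwise.}\end{cases}
\]
This gives both assertions at once: $e^G_H$ then depends only on the conjugacy class of $H$, since $\phi$ is injective, and the $e^G_H$ for $H\in[s_G]$ map to the primitive idempotents of the product.

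The key computation uses the standard count
\[
|(G/K)^L|=\#\{gK : L\le gKg^{-1}\}=\frac{1}{|K|}\#\{g\in G: g^{-1}Lg\le K\}.
\]
Substituting this into the definition, the factor $|K|$ cancels, and interchanging the sums gives
\[
\phi_L(e^G_H)=\frac{1}{|N_G(H)|}\sum_{g\in G}\ \sum_{g^{-1}Lg\le K\le H}\mu_G(K,H).
\]
Here the inner sum is empty, hence $0$, unless $g^{-1}Lg\le H$.

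Next apply the defining property of the Möbius function, $\sum_{A\le K\le H}\mu_G(K,H)=\delta_{A,H}$, with $A=g^{-1}Lg$. The inner sum equals $1$ exactly when $g^{-1}Lg=H$ and $0$ otherwise. The outer count is therefore $\#\{g: g^{-1}Lg=H\}$. This is $|N_G(H)|$ if $L$ is conjugate to $H$, being a coset of $N_G(H)$, and $0$ otherwise. This yields the displayed values of $\phi_L(e^G_H)$.

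The only delicate step is the bookkeeping in the interchange of sums. One must get the fixed-point formula for $(G/K)^L$ with the right normalisation by $|K|$, so that it exactly cancels the weight $|K|$ in $e^G_H$. One must also apply the Möbius relation in the form summing over the lower variable $K$ with the upper end $H$ fixed. Everything else is formal once Burnside's theorem on the injectivity of the mark homomorphism is assumed.
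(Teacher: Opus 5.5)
Your proof is correct. The fixed-point count $|(G/K)^L|=\frac{1}{|K|}\#\{g\in G : g^{-1}Lg\le K\}$ cancels the weight $|K|$, and the relation $\sum_{A\le K\le H}\mu_G(K,H)=\delta_{A,H}$ collapses the inner sum. The count $\#\{g : g^{-1}Lg=H\}\in\{0,|N_G(H)|\}$ then gives $\phi_L(e^G_H)=1$ or $0$ according as $L$ is or is not conjugate to $H$. Injectivity of the mark map then yields both claims; over $\Q$ this follows because the table of marks is triangular with nonzero diagonal entries $|N_G(K):K|$.

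The paper gives no proof of this lemma. It only cites Gluck and Yoshida, with wording taken from Bouc's book. Your argument via the mark homomorphism and M\"obius inversion is the standard proof found in those sources.
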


\begin{notation}
We review a notation from \cite{Bouc10} and \cite{Romero}.
If $G$ is a finite group, and $N$ is a normal subgroup of $G$, denote by $m_{G,N}$ the rational number defined by
\[
 m_{G,N}=\frac{1}{|G|}\sum_{XN=G}|X|\mu_G(X,G).
\]
\end{notation}

\par\medskip
\begin{lemma}{\rm (\cite[5.2.4.4]{Bouc10})} \label{Lem:DefIdemp} 
Let $N$ be a normal subgroup of $G$.
If $H$ is a subgroup of $G$, then
\[
 {\rm Def}^G_{G/N}(e^G_H)=\frac{|N_G(HN)/HN|}{|N_G(H)/H|}m_{H,H\cap N}e^{G/N}_{HN/N}.
\]
In particular, 
\[
 {\rm Def}^G_{G/G}(e^G_H)=\frac{1}{|N_G(H)/H|}m_{H,H}e^{G/G}_{G/G}.
\]
\end{lemma}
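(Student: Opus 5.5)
The plan is to show that ${\rm Def}^G_{G/N}(e^G_H)$ is a scalar multiple of the single idempotent $e^{G/N}_{HN/N}$, and then to compute that scalar by evaluating marks at $HN/N$. Throughout I use Lemma \ref{Lem:IPformula} and the rule ${\rm Def}^G_{G/N}[G/K]=[G/KN]$, where $G/KN$ is viewed as a $G/N$-set. This rule holds because $N\backslash (G/K)\cong G/KN$.

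\emph{Step 1: reduction to a multiple of one idempotent.} I would use two standard facts.
\begin{itemize}
\item Frobenius reciprocity: ${\rm Def}^G_{G/N}({\rm Inf}^G_{G/N}(a)\cdot x)=a\cdot {\rm Def}^G_{G/N}(x)$ for $a\in \Q B(G/N)$ and $x\in\Q B(G)$. This comes from $N\backslash(Y\times X)\cong Y\times N\backslash X$ when $N$ acts trivially on $Y$.
\item The decomposition of an inflated idempotent: ${\rm Inf}^G_{G/N}(e^{G/N}_{L/N})$ is the sum of the $e^G_K$, for $K\in[s_G]$, with $KN$ conjugate to $L$. This follows by comparing marks, since $|{\rm Inf}(Y)^K|=|Y^{KN/N}|$.
\end{itemize}
From these, ${\rm Inf}^G_{G/N}(e^{G/N}_{HN/N})\cdot e^G_H=e^G_H$. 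Reciprocity then gives $e^{G/N}_{HN/N}\cdot{\rm Def}^G_{G/N}(e^G_H)={\rm Def}^G_{G/N}(e^G_H)$. Because the $e^{G/N}_{L/N}$ are orthogonal primitive idempotents, we get ${\rm Def}^G_{G/N}(e^G_H)=c\,e^{G/N}_{HN/N}$ for some $c\in\Q$.

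\emph{Step 2: computing $c$.} The mark of $e^{G/N}_{HN/N}$ at $HN/N$ is $1$, so $c$ is the mark of
\[
{\rm Def}^G_{G/N}(e^G_H)=\frac{1}{|N_G(H)|}\sum_{K\le H}|K|\mu_G(K,H)[G/KN]
\]
at $HN/N$. It therefore suffices to count the fixed points $(G/KN)^{HN}$. A coset $gKN$ is fixed by $HN$ exactly when $HN\le gKNg^{-1}$. Since $K\le H$ we have $|KN|\le|HN|$, so this forces $KN=HN$ and $g\in N_G(HN)$. Hence the count is $|N_G(HN)|/|HN|$ if $KN=HN$, and $0$ otherwise.

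\emph{Step 3: identifying the Möbius sum.} For $K\le H$, the condition $KN=HN$ is equivalent to $K(H\cap N)=H$, by Dedekind's law. Also $\mu_G(K,H)=\mu_H(K,H)$, because the interval $[K,H]$ is the same in both posets. So the surviving sum is $\sum_{K(H\cap N)=H}|K|\mu_H(K,H)=|H|\,m_{H,H\cap N}$. This gives
\[
c=\frac{|N_G(HN)|\,|H|}{|N_G(H)|\,|HN|}\,m_{H,H\cap N}=\frac{|N_G(HN)/HN|}{|N_G(H)/H|}m_{H,H\cap N}.
\]
The special case $N=G$ is then immediate, since $HN=G$ and $N_G(G)/G$ is trivial.

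I expect the main obstacle to be Step 1, namely the justification of the reciprocity formula and the decomposition of ${\rm Inf}^G_{G/N}(e^{G/N}_{L/N})$. I would check both directly on transitive $G$-sets and on marks, rather than invoking general biset machinery. Once Step 1 is in place, Steps 2 and 3 are a short fixed-point count.
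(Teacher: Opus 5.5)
Your proof is correct and complete. You first reduce to a scalar multiple of $e^{G/N}_{HN/N}$ using the inflation formula for idempotents and the reciprocity ${\rm Def}^G_{G/N}({\rm Inf}^G_{G/N}(a)\cdot x)=a\cdot{\rm Def}^G_{G/N}(x)$, then find the scalar by counting $HN$-fixed points on $G/KN$ and applying Dedekind's law. This is the standard argument; the paper gives no proof of its own and simply cites Bouc's Theorem 5.2.4, which to my recollection is proved along essentially these lines.
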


\medskip

\par\medskip
\begin{lemma}{\rm (\cite[(5.6.1)]{Bouc10})}\label{Lem:MoebiusuInversion}
Let $G$ be a finite group. Then 
$m_{G,G}=0$ if $G$ is not cyclic, and $m_{G,G}=\varphi(n)/n$ if $G$ is cyclic of order $n$.
\end{lemma}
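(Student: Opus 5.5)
The plan is to read $m_{G,G}$ as a Möbius inversion over the subgroup lattice of $G$. Taking $N=G$ in the definition of $m_{G,N}$, the condition $XG=G$ holds for every $X\le G$. Hence
\[
 m_{G,G}=\frac{1}{|G|}\sum_{X\le G}|X|\,\mu_G(X,G).
\]
So it is enough to identify the integer $\sum_{X\le G}|X|\mu_G(X,G)$ and then divide by $|G|$.

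For $Y\le G$, let $f(Y)$ be the number of elements $y\in G$ with $\langle y\rangle=Y$. Thus $f(Y)=0$ unless $Y$ is cyclic, and $f(Y)=\varphi(|Y|)$ when $Y$ is cyclic. Every element $x$ of a subgroup $X$ generates exactly one subgroup $\langle x\rangle\le X$. Sorting the elements of $X$ by the subgroup they generate gives
\[
 |X|=\sum_{Y\le X}f(Y)\qquad\text{for all }X\le G .
\]
Both sides are functions on the finite poset of subgroups of $G$. Möbius inversion in the form ``$g(X)=\sum_{Y\le X}f(Y)$ for all $X$ implies $f(X)=\sum_{Y\le X}\mu_G(Y,X)g(Y)$'' therefore applies with $g(X)=|X|$. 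Evaluated at $X=G$ it gives
\[
 f(G)=\sum_{Y\le G}\mu_G(Y,G)\,|Y|.
\]

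Combining the two displays yields $m_{G,G}=f(G)/|G|$. If $G$ is not cyclic, no element generates $G$, so $f(G)=0$ and $m_{G,G}=0$. If $G$ is cyclic of order $n$, its generators are exactly its $\varphi(n)$ elements of order $n$, so $m_{G,G}=\varphi(n)/n$.

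The one point that needs care is the direction of the inversion: the summation runs over subgroups below $X$, so the Möbius factor must be $\mu_G(Y,X)$ with $Y\le X$. This matches the factor $\mu_G(X,G)$ in the definition of $m_{G,G}$, so there is no real obstacle beyond checking this orientation.
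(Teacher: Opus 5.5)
Your proof is correct and complete: taking $N=G$ makes the condition $XG=G$ vacuous, and Möbius inversion of $|X|=\sum_{Y\le X}f(Y)$ over the subgroup poset identifies $\sum_{X\le G}|X|\,\mu_G(X,G)$ with the number of elements generating $G$, with the orientation of $\mu_G$ handled correctly. This is P.~Hall's Eulerian function $\phi_1(G)$. The paper gives no proof of its own and simply cites Bouc (5.6.1), whose standard proof rests on this same count of generators, so your route is essentially the standard one.
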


\section{Biset functors}
In this section we summarize some basic properties of the biset theory introduced by Bouc, based on \cite{Bouc10}. 
Denote by $G^{\rm op}$ the opposite group of $G$. 
If $G$ and $H$ are finite groups, then an $(H,G)$-biset is a left $(H\times G^{\rm op})$-set. 
As usual, for an $(H, G)$-biset $U$, we set $h\cdot u\cdot g := (h, g^{-1})u$, for any $h \in H, g \in  G, u \in U$.
If $U$ and $V$ are $(H,G)$-bisets, then a biset homomorphism $f$ from $U$ to $V$ is a map $f:U\to V$ such that $f(h\cdot u\cdot g)=h\cdot f(u)\cdot g$, for any $h\in H, u\in U, g\in G$.
Let $L$ be a finite group.
If $U$ is an $(H,G)$-biset, and $V$ is a $(K,H)$-biset, 
the composition of $V$ and $U$ is the set of $H$-orbits $V\times_HU$ on the cartesian product $V\times U$, 
where the right action of $H$ is defined by
$\forall (v,u)\in V\times U,\, \forall h\in H,\, (v,u)\cdot h=(v\cdot h, h^{-1}\cdot u)$. 
The $H$-orbit of $(v,u)\in V\times U$ is denoted by $(v,_Hu)$.
Then the set $V\times U$ has a $(L,G)$-biset structure defined by
\[
 \forall\ell\in  L,\forall (v,_Hu)\in V\times_H U,\forall g\in G, \ell\cdot (v,_H u)\cdot g=(\ell\cdot  v,_H u\cdot g).
\]
The biset Burnside group $B(H,G)$ is the Burnside group $B(H\times G^{\rm op})$, namely, the Grothendieck group of $(H,G)$-bisets.
If $N\trianglelefteq G$, the set $G/N$ is an $(G/N, G)$-biset, for the left action of $G/N$ by multiplication, and the right action of $G$ by projection to $G/N$, and then right multiplication in $G/N$. 
If $X$ is a $G$-set, then there is an isomorphism of $G/N$-sets
\[
 (G/N)\times_G X \cong  N\backslash X.
\]
So the {\em deflation map} ${\rm Def}^G_{G/N} : B(G)\to B(G/N)$ is such that ${\rm Def}^G_{G/N}([X]) = [N\backslash X]$,
for any finite $G$-set $X$ (\cite[p.78]{Bouc10}).

We write 
\[
 m(G)=\sum_{g\in G}\frac{1}{o(g)}
\]
which the sum of the inverses of the element orders in $G$. 
The value has recently begun to be studied by various researchers (for instance \cite{Baniasad Azad Khosravi and Rashidi}).
For simplicity of notation, let
\[
 G\backslash {\alpha} = \left|{\rm Def}^G_{G/G}({\alpha})\right|
\]
for $\alpha\in \Q B(G)$.

\medskip
As established in the proof of \cite[Theorem 6.2.5]{Bouc00}, the following relation holds:
\begin{lemma}\label{Lem:non-cyclic}
Let $G$ be a finite group and $H$ be a subgroup of $G$. 
Then the following are equivalent:
\begin{enumerate}
\item $H$ is non-cyclic. 
\item $G\backslash {e^G_H}=0$.
\end{enumerate}
\end{lemma}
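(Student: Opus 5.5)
We compute $G\backslash e^G_H$ explicitly using the special case of Lemma \ref{Lem:DefIdemp}, and then deduce the equivalence from Lemma \ref{Lem:MoebiusuInversion}.

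First apply the second formula of Lemma \ref{Lem:DefIdemp} with $N=G$:
\[
{\rm Def}^G_{G/G}(e^G_H)=\frac{1}{|N_G(H)/H|}\,m_{H,H}\,e^{G/G}_{G/G}.
\]
The trivial group $G/G$ has $\Q B(G/G)\cong\Q$, and its unique primitive idempotent $e^{G/G}_{G/G}$ is the class of the one-point set. This can also be read off from Lemma \ref{Lem:IPformula}: the only subgroup is the trivial one, $\mu=1$ and $|N|=1$, so $e^{G/G}_{G/G}=[G/G]$. Hence $|e^{G/G}_{G/G}|=1$. Since $\alpha\mapsto|\alpha|$ is $\Q$-linear, we obtain
\[
G\backslash e^G_H=\frac{|H|}{|N_G(H)|}\,m_{H,H}.
\]

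Next, the prefactor $|H|/|N_G(H)|$ is a nonzero rational number. Thus $G\backslash e^G_H=0$ if and only if $m_{H,H}=0$. By Lemma \ref{Lem:MoebiusuInversion}, applied to the group $H$, $m_{H,H}=0$ when $H$ is non-cyclic. When $H$ is cyclic of order $n$, $m_{H,H}=\varphi(n)/n$, which is nonzero because $\varphi(n)\ge 1$. This gives both implications (1)$\Rightarrow$(2) and (2)$\Rightarrow$(1).

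The only delicate point is bookkeeping: Lemma \ref{Lem:DefIdemp} must be applied with $m_{H,H\cap N}$ computed inside $H$, which for $N=G$ gives $m_{H,H}$. We must also check that $|\cdot|$ sends $e^{G/G}_{G/G}$ to $1$ and not to some other normalization. Both follow directly from the definitions already recalled, so I expect no real obstacle.

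The same formula also records the value $G\backslash e^G_H=\varphi(|H|)/|N_G(H)|$ for cyclic $H$. This is presumably what the main theorem uses. Summing $|G:H|$ times this value over cyclic $H\in[\mcalC_G]$ gives $\sum_H |G:N_G(H)|\varphi(|H|)/|H|$, which counts the elements $g$ with weight $1/o(g)$ grouped by the cyclic subgroup $\langle g\rangle$, and so equals $m(G)$.
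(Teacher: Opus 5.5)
Your proposal is correct and follows essentially the same route as the paper: apply the $N=G$ case of Lemma \ref{Lem:DefIdemp} to get $G\backslash e^G_H=\frac{1}{|N_G(H)/H|}m_{H,H}$, then conclude with Lemma \ref{Lem:MoebiusuInversion}. You also make explicit two points the paper leaves implicit, namely $\left|e^{G/G}_{G/G}\right|=1$ and $\varphi(n)/n\neq 0$.
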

{\itshape Proof.} 
Since Lemma \ref{Lem:DefIdemp} shows
\[
 G\backslash e^G_H = \left|{\rm Def}^G_{G/G}(e^G_H)\right| =\frac{1}{|N_G(H)/H|}m_{H,H}\left|e^{G/G}_{G/G}\right|=\frac{1}{|N_G(H)/H|}m_{H,H},
\]
Lemma \ref{Lem:MoebiusuInversion} yields the equivalence of (1) and (2).
 $\Box$
\par\medskip
We are now in a position to prove the theorem of this note.
\begin{theorem}\label{defg=mg}
\label{Thm:Def formula in KR(G)}
Let $G$ be a finite group. Then 
\[
 G\backslash {\sigma^G}=m(G).
\]
\end{theorem}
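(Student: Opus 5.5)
By linearity of $\Def^G_{G/G}$ and of $\alpha\mapsto|\alpha|$, I will compute $G\backslash\sigma^G$ term by term:
\[
G\backslash\sigma^G=\sum_{H\in[s_G]}|G:H|\,G\backslash e^G_H .
\]
The computation in the proof of Lemma \ref{Lem:non-cyclic} (that is, Lemma \ref{Lem:DefIdemp} with $N=G$) gives
\[
G\backslash e^G_H=\frac{|H|}{|N_G(H)|}\,m_{H,H}.
\]
By Lemma \ref{Lem:MoebiusuInversion}, every non-cyclic $H$ contributes zero. Hence the sum reduces to $H\in[\mcalC_G]$, and for $H$ cyclic of order $n_H$ we have $m_{H,H}=\varphi(n_H)/n_H$. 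The sum therefore becomes
\[
G\backslash\sigma^G=\sum_{H\in[\mcalC_G]}\frac{|G|}{|H|}\cdot\frac{|H|}{|N_G(H)|}\cdot\frac{\varphi(|H|)}{|H|}
=\sum_{H\in[\mcalC_G]}|G:N_G(H)|\,\frac{\varphi(|H|)}{|H|}.
\]

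Next I pass from conjugacy classes to individual cyclic subgroups. The index $|G:N_G(H)|$ is the size of the conjugacy class of $H$, and $\varphi(|H|)/|H|$ is constant on that class. So the last expression equals
\[
\sum_{C\le G,\ C\text{ cyclic}}\frac{\varphi(|C|)}{|C|}.
\]

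Finally I regroup $m(G)$ by the cyclic subgroup each element generates. Every $g\in G$ generates exactly one cyclic subgroup $C=\langle g\rangle$, and each cyclic $C$ has exactly $\varphi(|C|)$ generators, each of order $|C|$. Therefore
\[
m(G)=\sum_{C\text{ cyclic}}\varphi(|C|)\cdot\frac{1}{|C|},
\]
which is exactly the expression above, and the theorem follows.

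I see no real obstacle here. The only point requiring care is the normalization: with the formula of Lemma \ref{Lem:DefIdemp}, the factor $|G:H|$ in $\sigma^G$ must combine with $1/|N_G(H)/H|$ to give precisely the class size $|G:N_G(H)|$, and it does.
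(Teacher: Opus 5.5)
Your proposal is correct and follows the paper's proof essentially step for step. You expand $\sigma^G$ linearly, apply Lemma \ref{Lem:DefIdemp} with $N=G$ so that $|G:H|$ combines with $1/|N_G(H)/H|$ into $|G:N_G(H)|$, discard non-cyclic $H$ via Lemma \ref{Lem:MoebiusuInversion}, and regroup $m(G)$ by the cyclic subgroup each element generates. Your explicit justification of that last regrouping (class size times $\varphi(|C|)$ generators of order $|C|$) simply spells out a step the paper leaves implicit.
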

{\itshape Proof.} 
By applying the lemmas prepared so far, we can derive the following sequence of equalities:
\begin{align*}
 G\backslash {\sigma^G}
&=\sum_{H\in [s_G]}|G:H|{G\backslash}e^G_H\\
&=\sum_{H\in [\mcalC_G]}|G:H|{G\backslash}e^G_H \quad (\mbox{Lemma}\, \ref{Lem:non-cyclic})\\
&=\sum_{H\in [\mcalC_G]}|G:H|\left| {\rm Def}^G_{G/G}(e^G_H)\right| \quad \\
&=\sum_{H\in [\mcalC_G]}|G:N_G(H)|m_{H,H}\left|e^{G/G}_{G/G}\right|\quad (\mbox{Lemma}\, \ref{Lem:DefIdemp})\\
&=\sum_{H\in [\mcalC_G]}|G:N_G(H)|m_{H,H}\\
&=\sum_{H\in [\mcalC_G]}{|G:N_G(H)|}\frac{\varphi(|H|)}{|H|} \quad (\mbox{Lemma } \ref{Lem:MoebiusuInversion})\\
&=\sum_{\langle g \rangle\in [\mcalC_G]} 
{|G:N_G(\langle g \rangle)|}\frac{\varphi(o(g))}{o(g)}\\
&=\sum_{g\in G}\frac{1}{o(g)}\\
&=m(G).
\end{align*}
This completes the proof of the theorem. $\Box$

\medskip
Although several results exist based on representations using the sum of the inverses of the element orders in a group, this note aims to apply them to the two results below.

\begin{proposition}{\rm \cite[Proposition 3.2]{OdaYoshida01}}
Let $G$ be a finite group. Then the rank of the crossed Burnside ring $B^{\rm c}(G)$ of $G$ is equal to 
\[
\mathrm{rk}_{\bz}(B^{\rm c}(G)) = \sum_{K\in[{\it s}_G]}\frac{|C_G(K)|}{|N_G(K)|}|K|\left(\sum_{x\in K/[K,K]}\frac{1}{o(x)}\right).
\]
\end{proposition}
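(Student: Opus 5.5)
The plan is to reduce the rank computation to an orbit count on a concrete finite $G$-set, and then to evaluate that count with the Cauchy--Frobenius (Burnside) lemma. The quantity $\sum_{x\in K/[K,K]}1/o(x)=m(K/[K,K])$, which the Theorem just proved identifies with $(K/[K,K])\backslash\sigma^{K/[K,K]}$, should then appear as the inner sum. I am not certain of the exact combinatorial description of the basis used in \cite{OdaYoshida01}, so that identification is the point to check first.

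\textbf{Step 1: a basis indexed by orbits.} Since $B^{\rm c}(G)$ is free abelian on the isomorphism classes of transitive crossed $G$-sets, the rank is the number of such classes. Following \cite{OdaYoshida01}, I would describe these classes, or equivalently the species (ghost coordinates) of $\Q\otimes B^{\rm c}(G)$, as $G$-orbits on a set of pairs $(K,\lambda)$. Here $K\le G$ and $\lambda$ is an additional datum that factors through $K/[K,K]$ and interacts with the centralizer $C_G(K)$. The most likely candidate is a pair consisting of an element of $C_G(K)$ and an element of $\hom(K,\Q/\bz)$, or dually of $K/[K,K]$. Grouping the orbits according to the $G$-class of $K$ gives
\[
\mathrm{rk}_{\bz}B^{\rm c}(G)=\sum_{K\in[s_G]}\#\bigl(N_G(K)\text{-orbits on the fibre over }K\bigr).
\]

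\textbf{Step 2: count the fibre orbits.} For each $K$, I would apply the Cauchy--Frobenius lemma to the action of $N_G(K)$ on the fibre. The factor $|C_G(K)|/|N_G(K)|$ should come from the averaging $1/|N_G(K)|$ combined with the $C_G(K)$-contribution to the fibre. The remaining factor $|K|\,m(K/[K,K])$ should come from the identity, for an abelian group $A$,
\[
|A|\,m(A)=\sum_{a\in A}\frac{|A|}{o(a)}=\sum_{C\le A\ \text{cyclic}}\varphi(|C|)\,|A:C|.
\]
This identity counts pairs consisting of a generator of a cyclic subgroup $C$ together with a coset of $C$. Applied with $A=K/[K,K]$ and then multiplied by $|[K,K]|$, it yields $|K|\,m(K/[K,K])$. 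In effect, the fixed-point count of a cyclic group $\langle x\rangle$ acting on $K/[K,K]$-data contributes $|K|/o(x)$, and summing over $x$ produces $m$.

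\textbf{Step 3: the expected obstacle.} The hard step is Step 2: checking that the Burnside-lemma sum really factors as $|C_G(K)|/|N_G(K)|$ times $|K|\,m(K/[K,K])$. This requires the fixed-point sets to split as a product, a $C_G(K)$-part times a $K^{\rm ab}$-part. To establish it, I would first check it on the marks/ghost side, computing the dimension of $\Q\otimes B^{\rm c}(G)$ as a sum of dimensions of the centres of the twisted group algebras that appear in \cite{OdaYoshida01}. After that, the Theorem (the identity $G\backslash\sigma^G=m(G)$, applied to $K/[K,K]$) rewrites the inner sum in the form used in the Corollary.
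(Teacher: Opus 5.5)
The paper does not prove this proposition; it quotes it from Oda--Yoshida. So your proposal has to stand on its own, and as planned it has a genuine gap.

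\textbf{The gap.} Step~2 cannot succeed. Whatever the fibres are, the number of $N_G(K)$-orbits on a fibre is an integer, but the summands of the formula are in general not integers. For $G=S_3$ the summand for $K=A_3$ is $\frac36\cdot 3\cdot m(C_3)=\frac52$, and the summand for $K=S_3$ is $\frac16\cdot 6\cdot m(C_2)=\frac32$. The four summands $1,3,\frac52,\frac32$ do add up to the true rank $8$.

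So the formula is not a term-by-term refinement of any decomposition of the basis, or of the ghost side in Step~3, indexed by a subgroup attached to each basis element or species. Step~1 is also off. A transitive crossed $G$-set is $(G/H,\xi)$ with $a=\xi(H)\in C_G(H)$. These are classified by pairs $(H,a)$ with $a\in C_G(H)$, up to simultaneous $G$-conjugation, and there is no character datum. With the correct fibre $C_G(H)$, already for $G=C_2$ the fibres over $1$ and over $C_2$ each have $2$ orbits, while the corresponding summands are $1$ and $3$.

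\textbf{The missing idea.} The $K$ in the formula is not the stabilizer $H$. It is the subgroup $H\langle g\rangle$ generated by $H$ and the group element over which the Cauchy--Frobenius lemma averages. Apply the lemma globally to $G$ acting on $\Omega=\{(H,a): H\le G,\ a\in C_G(H)\}$. Using $C_G(H)\cap C_G(g)=C_G(H\langle g\rangle)$, you get
\[
\mathrm{rk}_{\bz}(B^{\rm c}(G))=\frac{1}{|G|}\sum_{g\in G}\,\sum_{\substack{H\le G\\ g\in N_G(H)}}|C_G(H)\cap C_G(g)|=\frac{1}{|G|}\sum_{K\le G}|C_G(K)|\,t(K).
\]
Here $t(K)$ is the number of pairs $(H,g)$ with $H\trianglelefteq K$, $g\in K$ and $K=H\langle g\rangle$.

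Such $H$ have cyclic quotient, so they contain $[K,K]$. With $A=K/[K,K]$ this gives $t(K)=|[K,K]|\sum|\bar H|\,\varphi(|A:\bar H|)$, the sum running over $\bar H\le A$ with $A/\bar H$ cyclic. Your identity enumerates cyclic \emph{subgroups}, whereas here one counts cyclic \emph{quotients}, so you need a duality step. By duality of finite abelian groups, $A$ has as many subgroups with cyclic quotient of order $d$ as cyclic subgroups of order $d$. Your identity then gives $t(K)=|K|\,m(K/[K,K])$.

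Finally, $\frac{1}{|G|}\sum_{K\le G}=\sum_{K\in[s_G]}\frac{1}{|N_G(K)|}$ for class functions, which yields the stated formula. The Theorem $G\backslash\sigma^G=m(G)$ plays no role in this proposition; it only enters the Corollary.
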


\begin{theorem}{\rm \cite[Theorem 2.8]{Bou11}}
Let $G$ be a finite group, let $p$ be a prime number, and let $k$ be a field of characteristic $p$, big enough to be a splitting field for all the groups $N_G(Q)/Q$, for $Q\in S_p(G)$. Then the determinant of the Cartan matrix of the algebra $\mu_k(G,1)$ is equal to 
\[
 \mathrm{det}\mathsf{C}(\mu_k(G,1)) =
\prod_{R\in S_p(G)}\prod_{s\in [{\overline{N}_G(R)}_{p'}]}\left(|{C_{\overline{N}_G(R)}(s)}|_p\sum_{x\in R/[\langle sR\rangle,R]}\frac{1}{o(x)}\right).
\]
\end{theorem}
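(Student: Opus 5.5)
The statement is Bouc's \cite[Theorem 2.8]{Bou11}, and my plan is to reprove it by passing to characteristic zero and factoring the Cartan matrix as a product of decomposition matrices. Choose a $p$-modular system $(K,\mathcal O,k)$ with $K$ large enough, and lift $\mu_k(G,1)$ to the $\mathcal O$-order $\mu_{\mathcal O}(G,1)$, namely the summand of $\mu_{\mathcal O}(G)$ cut out by the block idempotent of $\mathcal O B(G)$ corresponding to $p$-perfect subgroups equal to $1$. The algebra $K\mu_{\mathcal O}(G,1)$ is semisimple, because Mackey algebras over a field of characteristic $0$ are semisimple (Thévenaz--Webb). By Brauer reciprocity for orders, the Cartan matrix then factors as $\mathsf C=D^{t}D$, where $D$ is the decomposition matrix.

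\textbf{Step 1: the simple modules on both sides.} In characteristic $p$, the simple $\mu_k(G,1)$-modules are the $S_{R,V}$. Here $R$ runs over $[s_p(G)]$ and $V$ over the simple $k\overline N_G(R)$-modules, and these are indexed by $[\overline N_G(R)_{p'}]$. In characteristic $0$, the simple modules are the $S_{H,W}$ with $H$ running over subgroups whose $p$-perfect part (the smallest normal subgroup with $p$-group quotient) is trivial, that is, $H$ a $p$-group. Up to conjugation we group the pairs $(H,W)$ according to the $p'$-part of $W$ restricted to $\overline N_G(H)$.

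\textbf{Step 2: block-triangularity.} Order the $p$-subgroups $R$ by size and show that $D$ is block lower triangular with respect to $R$. The block $D_{R,R}$ should be the decomposition matrix of a group-like algebra attached to $\overline N_G(R)$. Concretely, the projective cover of $S_{R,V}$ is induced from $\overline N_G(R)$ through the functor "inflate from $\overline N_G(R)$, then induce". Restricting along $R$ shows that its $K$-composition factors live at subgroups $H \ge_G R$. This gives
\[
\det\mathsf C=\prod_{R}\det\bigl(D_{R,\ast}^tD_{R,\ast}\bigr),
\]
a product of local Cartan-type determinants, one for each $R$ (possibly after replacing $D$ by a square triangular matrix via a change of basis of the Grothendieck group).

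\textbf{Step 3: the local determinant.} Fix $R$ and a $p'$-element $s$ of $\overline N_G(R)$. I expect the local factor to split as follows. The first part is the usual Cartan contribution of $k\overline N_G(R)$. By Brauer's theorem, $\det\mathsf C(k\overline N_G(R))=\prod_s|C_{\overline N_G(R)}(s)|_p$, which produces the factor $|C_{\overline N_G(R)}(s)|_p$. The second part counts the characteristic-zero simples lying over $(R,s)$. These correspond to characters of $R$ trivial on $[\langle sR\rangle,R]$, equivalently to $\langle s\rangle$-stable characters of $R/[\langle sR\rangle,R]$, weighted by the transitive-set sizes that enter the Mackey pairings. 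Summing these weights over cyclic subgroups $\langle x\rangle$ of $Q=R/[\langle sR\rangle,R]$ gives $\sum_{\langle x\rangle}\varphi(o(x))\,|Q:\langle x\rangle|/|Q|=\sum_{x\in Q}1/o(x)$. This is the same Möbius computation as in the proof of Theorem \ref{Thm:Def formula in KR(G)}, using Lemma \ref{Lem:MoebiusuInversion}, and it yields the factor $\sum_{x\in R/[\langle sR\rangle,R]}1/o(x)$.

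\textbf{Main obstacle.} The hard part is Step 3. One must identify precisely the diagonal block of $D^tD$ at $(R,s)$ and show that its determinant is exactly $|C_{\overline N_G(R)}(s)|_p$ times the element-order sum, rather than a power or a multiple of it. My first attempt would be to compute the evaluation at $R$ of the projective cover of $S_{R,V}$ through the Brauer quotient of the Mackey algebra at $R$. This reduces the problem to the group algebra $k[\overline N_G(R)]$ twisted by the Burnside-type module $kB(R)^{\langle s\rangle}$ modulo the deflation relations. The abelianization $R/[\langle sR\rangle,R]$ should then appear as the largest quotient of $R$ on which $s$ acts trivially, where the deflation computation in Lemma \ref{Lem:DefIdemp} is available.
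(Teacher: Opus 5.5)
The paper gives no proof of this statement. It is quoted from Bouc and used only as a black box, which Corollary \ref{cor_CBRandMac}(2) combines with Theorem \ref{defg=mg} for $Q=R/[\langle sR\rangle,R]$. So your proposal has to stand on its own, and it does not: Step 2 is false, and Step 3, which carries all the content, is only conjectured.

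\textbf{Step 2 fails.} Once $p$ divides $|G|$, the decomposition matrix $D$ is not square: there are more simple $K\mu_{\mathcal O}(G,1)$-modules than simple $\mu_k(G,1)$-modules. Block triangularity of a non-square $D$ does not make $D^tD$ block triangular, and it does not split $\det(D^tD)$ into blockwise terms. Cauchy--Binet gives a sum of squared maximal minors instead.

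Take $G=C_p$. The projective indecomposables $P_{1,k}$ and $P_{G,k}=kB$ represent $M\mapsto M(1)$ and $M\mapsto M(G)$. One has $KP_{1,k}\cong\bigoplus_W S^K_{1,W}$ and $KP_{G,k}\cong S^K_{1,K}\oplus S^K_{G,K}$, so
\[
\mathsf C=D^tD=\begin{pmatrix}p&1\\1&2\end{pmatrix},\qquad \det\mathsf C=2p-1.
\]
Your product over $R$ gives $p\cdot 1=p$ (diagonal blocks) or $p\cdot 2$ (column blocks). The triangularity also runs the other way from what you state: $S_{R,V}$ occurs in the reduction of $S^K_{H,W}$ only if $H\le_G R$. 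Here $KP_{G,k}$ contains $S^K_{1,K}$.

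No change of basis rescues this. For $p=2$, $\det\mathsf C=3$ is not a perfect square, so $\mathsf C\neq E^tE$ for any square integral $E$.

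\textbf{The local factors are not integers.} In this example the factor of the target formula at $R=G$ is $\sum_{x\in C_p}1/o(x)=(2p-1)/p$. So these factors cannot be determinants of integral Gram blocks $D_{R,*}^tD_{R,*}$. They have to come from something like Schur complements (Gram--Schmidt) taken along increasing $R$. Here, projecting the column of $P_{G,k}$ orthogonally to that of $P_{1,k}$ leaves squared norm exactly $2-1/p$.

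\textbf{Step 3 is only asserted.} Showing in general that the rational local factor at $R$ equals $\prod_s|C_{\overline N_G(R)}(s)|_p\sum_{x\in R/[\langle sR\rangle,R]}1/o(x)$ is precisely the content of Bouc's theorem. Your heuristic counts $K$-simples over $(R,s)$ via characters of $R$. That does not match the actual $K$-simples $S^K_{H,W}$, which have $H$ a $p$-subgroup and $W$ a simple $K\overline N_G(H)$-module. The M\"obius computation of Theorem \ref{defg=mg} only evaluates $\sum_x 1/o(x)$ once that factor is known; it does not identify it.
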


We conclude this note by stating the following corollary of Theorem \ref{defg=mg}.
\begin{corollary}\label{cor_CBRandMac}
 Let $G$ be a finite group. Then the following hold.
\begin{enumerate}
 \item The rank of the crossed Burnside ring $B^{\rm c}(G)$ of $G$ is equal to 
\[
\mathrm{rk}_{\bz}(B^{\rm c}(G)) = \sum_{K\in[{\it s}_G]}\frac{|C_G(K)|}{|N_G(K)|}|K|\left(K/[K,K]\backslash {\sigma^{K/[K,K]}}\right).
\]
\item The determinant of the Cartan matrix of the algebra $\mu_k(G,1)$ is equal to 
\[
 \mathrm{det}\mathsf{C}(\mu_k(G,1)) =
\prod_{R\in S_p(G)}\prod_{s\in [{\overline{N}_G(R)}_{p'}]}\left(|{C_{\overline{N}_G(R)}(s)}|_p \left((R/[\langle sR\rangle,R])\backslash {\sigma^{R/[\langle sR\rangle,R]}}\right) \right).\]
\end{enumerate}
\end{corollary}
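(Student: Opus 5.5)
The corollary follows by substituting Theorem \ref{defg=mg} into the two cited formulas. Both \cite[Proposition 3.2]{OdaYoshida01} and \cite[Theorem 2.8]{Bou11} contain inner factors of the form $\sum_{x\in Q}\frac{1}{o(x)}$, where $Q$ is a suitable finite quotient group. Each such factor is exactly $m(Q)$, and by Theorem \ref{defg=mg} applied to $Q$ in place of $G$ we have $m(Q)=Q\backslash\sigma^{Q}$. So the plan is to identify each inner sum as $m(Q)$ for the correct group $Q$ and then replace it with $Q\backslash\sigma^Q$.

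For (1), for each $K\in[s_G]$ we take $Q=K/[K,K]$, the abelianization of $K$, which is a finite group. The cited proposition gives the inner factor $\sum_{x\in K/[K,K]}\frac{1}{o(x)}=m(K/[K,K])$. Theorem \ref{defg=mg} applied to $K/[K,K]$ turns this into $(K/[K,K])\backslash\sigma^{K/[K,K]}$. The outer coefficients $\frac{|C_G(K)|}{|N_G(K)|}|K|$ are untouched, so the formula follows term by term.

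For (2), for each $R\in S_p(G)$ and $s\in[\overline{N}_G(R)_{p'}]$ we take $Q=R/[\langle sR\rangle,R]$. The one point needing care is that this is a group at all, i.e.\ that $[\langle sR\rangle,R]$ is normal in $R$. Here $s\in N_G(R)/R$ is a coset $sR$, and $\langle sR\rangle$ denotes the subgroup of $N_G(R)$ generated by a representative together with $R$. This subgroup normalizes $R$, so $[\langle sR\rangle,R]\le R$. Moreover $R$ normalizes both $\langle sR\rangle$ and $R$, hence normalizes their commutator subgroup. Thus $Q$ is a finite $p$-group, as Bouc's statement implicitly uses. Then $\sum_{x\in Q}\frac{1}{o(x)}=m(Q)=Q\backslash\sigma^Q$ by Theorem \ref{defg=mg}. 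Substituting this into each factor of the double product, with the factors $|C_{\overline{N}_G(R)}(s)|_p$ unchanged, gives (2).

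There is no real obstacle: the content lies entirely in Theorem \ref{defg=mg} and the two cited results. The only step that needs a check is the one already handled in (2): that each quotient is a genuine finite group, and that the notation $\langle sR\rangle$ is read consistently with \cite{Bou11}, so that the theorem applies to it.
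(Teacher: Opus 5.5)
Your proposal is correct and matches the paper's route: the paper gives no separate proof and presents the corollary as an immediate consequence of the main theorem. That is exactly your argument — each inner sum $\sum_{x\in Q}1/o(x)$ in the cited results of Oda--Yoshida and Bouc equals $m(Q)$, which the main theorem (applied to the finite group $Q$) replaces by $Q\backslash\sigma^{Q}$; your check that $[\langle sR\rangle,R]$ is normal in $R$ is a harmless addition the paper leaves implicit.
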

\section*{Acknowledgments}
The author would like to express his sincere gratitude to Professor Naoki Chigira for providing valuable information regarding the title of the manuscript \cite{Baniasad Azad Khosravi and Rashidi}. 
 
\end{document}